\def\fkernelmat{\mathcal{K}^\alpha}
\def\z{\mathsf{z}}
\def\F{\mathsf{F}}
\def\L{\mathcal{L}}
\def\D{\mathcal{D}}
\def\a{\mathfrak{a}}
\def\b{\mathfrak{b}}
\def\Permut{P}
\def\R{\mathbb{R}}
\def\Z{\mathbb{Z}}
\def\P{\mathbb{P}}
\DeclareMathOperator{\blkdiag}{block\ diagonal}
\DeclareMathOperator{\stack}{vec}
\DeclareMathOperator{\sinc}{sinc}
\newcommand{\argmin}[1]{\underset{#1}{\operatorname{arg}\,\operatorname{min}}\;}
\newtheorem{theorem}{Theorem}
\newtheorem{proposition}[theorem]{Proposition}
\title{\LARGE \bf
Machine Learning Barycenter Approach to Identifying LPV State--Space Models
}
\author{Rodrigo~A.~Romano, 
	P. Lopes dos Santos, 
	Felipe~Pait, \\
	T-P Perdico\'{u}lis, 
	and Jos\'{e} A. Ramos 
}
\begin{document}

\maketitle
\thispagestyle{empty}
\pagestyle{empty}

\begin{abstract}

In this paper an identification method for state--space LPV models is presented. The method is based on a particular parameterization that can be written in linear regression form and enables  model estimation to be handled using Least--Squares Support Vector Machine (LS--SVM). The regression form has a set of design variables that act as filter poles to the underlying basis functions. In order to preserve the meaning of the Kernel functions (crucial in the LS--SVM context), these are filtered by a 2D--system with the predictor dynamics. A data--driven, direct optimization based approach for tuning this filter is  proposed. 
The method is assessed using a simulated example and the results obtained are twofold. First, in spite of the difficult nonlinearities involved, the nonparametric algorithm was able to learn the underlying dependencies on the scheduling signal.  Second,  a significant improvement in the performance of the proposed method is registered, if compared with the one achieved by placing the predictor poles at the origin of the complex plane, which is equivalent to considering an estimator based on an LPV auto--regressive 
structure.

\end{abstract}

\section{LPV STATE--SPACE MODEL IDENTIFICATION} \label{sec:lpvsysid}

Linear parameter--varying (LPV) models have proved to be particularly suitable for describing nonlinear and time--varying systems.  As a consequence, the development of efficient algorithms to estimate their parameters has attracted considerable research efforts \cite{Toth:2010a, LopesDosSantos:2012}. 

In most existing approaches, the system parameters  specify  linear combinations of basis functions, whose selection is a critical issue, because it is often necessary to use a large set of basis functions in order to capture unknown dependencies.  Usually  this leads to overparameterized model structures with sparse true parameter vectors \cite{Novara:2012, Toth:2009, Toth:2012b}.  In addition, if there is no prior information available, the chosen basis functions may be inadequate, leading to potential structural bias. 

Machine learning approaches based on the Least--Squares Support Vector Machine (LS--SVM) framework  that cope with the above mentioned issues have emerged recently \cite{Toth:2011,Laurain:2012,Bachnas:2014}. 
 Although the mainstream of parameter-varying control design methods employs state--space representations, most of the machine learning approaches found in the literature refer to input--output model structures.  Namely, a nonparametric approach employing state--space model structures and relying on LS--SVM for estimation of the output vector has been proposed by \cite{Santos:2014}, and  \cite{Rizvi:2015} describes a full nonparametric algorithm but under the assumption of measurable states, while \cite{Proimadis:2015} proposes a subspace method, which is known to be suitable for low dimensional problems only.

This paper presents an algorithm that uses the LS--SVM framework to identify state--space LPV models. The parameterization considered admits an output predictor which is linear in the model parameters and possesses a set of user--defined poles, which can be chosen to filter the noise from the data. A data--driven approach based on direct (derivative--free) optimization is proposed for  tuning this filter. Due to the linearity of the predictor, the parameter estimation problem is formulated in a LS--SVM setting. 
This feature makes it possible to depict a wide range of nonlinear dependencies as linear combinations of infinitely many functions defined through the choice of a particular inner product and a relative low dimension parameter vector.  However, the  model structure considered requires the basis functions to be filtered by the predictor before taking part in the inner product.  To preserve the meaning of the Kernel functions (crucial in the LS--SVM context), it is proposed to filter the kernel matrix by a  2D--system with the predictor dynamics.

The paper has the following structure: The motivation for the study of this problem as well as a brief review of the state--of--art in the field are presented in Section~\ref{sec:lpvsysid}.  In Section~\ref{sec:lpvpar}, the system parameterization is shown and the linear predictor derived.  The formulation of the LPV model as an LS--SVM is described in Section~\ref{subsec:flssvm}, and the data--driven approach for tuning the predictor poles is explained in Section~\ref{subsec:ddfiltertun}.  In Section~\ref{sec:simulations}, a set of Monte Carlo runs  on a  simulated example is carried out and the performance of the algorithm is compared with that of a standard auto--regressive (LPV--ARX) approach.  In Section~\ref{sec:conclusion}  conclusions are drawn
 and  directions for future work are outlined.

\section{LPV MODEL PARAMETERIZATION} \label{sec:lpvpar}

The  discrete--time linear parameter varying (LPV) systems considered are of the form
\begin{align}
x_{k+1} & = \left( A + L(p_k)C \right) x_k + B(p_k)u_k  \label{eq:moli_state}\\
y_k & = C x_k , \label{eq:moli_out}
\end{align}
where $x_k \in \R^{n_x}$, $u_k \in \R$ and $y_k \in \R$ are the state, input and output, respectively. 
 The constant matrices $C$ and $A$ can be freely chosen, provided $A$ is stable and the pair $(C,A)$ is observable. The time--varying parameter vectors $L(p_k) \in \R^{n_x}$ and $B(p_k) \in \R^{n_x}$ are given by
\begin{align}
L(p_k) & = \sum_{r=1}^{n_f} L_r f_r (p_k), \label{eq:Lpk} \\
B(p_k) & = \sum_{r=1}^{n_f} B_r f_r (p_k), \label{eq:Bpk}
\end{align}
 where the scheduling signal $p_k: \Z \to \P$ is assumed to be known in each sampling instant $k$. The set $\P \subseteq \R^{n_p}$ denotes the scheduling space and $f_r(p_k) : \P \to \R$ are arbitrary basis functions whose contributions are weighted by $L_r$ and $B_r$, $r \in \{1, \ldots, n_f \}$.

This state--space parameterization is inspired by an LTI linear time--invariant structure proposed in the context of adaptive control theory \cite{Pait:1991}. Due to the observability of the pair $(C,A)$,  for a constant scheduling signal (that is, $p_k = \bar{p}$ for all $k \in \Z$), the structure \eqref{eq:moli_state}--\eqref{eq:moli_out} matches any transfer function of McMillan degree not exceeding $n_x$. Define the parameter vector
\begin{equation}
\theta = \stack \left( \begin{bmatrix}
L_1 \;\; \cdots \;\; L_{n_f} \;\; 
B_1 \;\; \cdots \;\; B_{n_f}
\end{bmatrix} \right) \in \R^{2 n_x n_f},
\end{equation}
where the operator $\stack \left( \cdot \right)$ stacks the columns of the argument on top of each other. As shown next, it is possible to construct a predictor to $y_k$, which is linear with respect to $\theta$. 
\begin{proposition} \label{prop:predictor}
The LPV model \eqref{eq:moli_state}--\eqref{eq:moli_out} admits a predictor described by the realization
\begin{align}
\varphi_{k+1} & = \mathcal{A} \varphi_k +
\mathcal{B} \left( \begin{bmatrix} y_k \\ u_k \end{bmatrix} \otimes \F(p_k) \right)  \label{eq:scalar_pred_state} \\
\hat{y}_k & = \theta^\top \varphi_k \label{eq:scalar_pred_eqn}
\end{align}
where
\begin{align*}
\mathcal{A} & \triangleq \blkdiag\{A^\top, \ldots, A^\top \} \in \R^{2 n_x n_f \times 2 n_x n_f}  \\
\mathcal{B} & \triangleq \blkdiag\{C^\top, \ldots, C^\top \} \in \R^{2 n_x n_f \times 2 n_f} \\
\F(p_k) & \triangleq \begin{bmatrix} f_1(p_k) & \cdots & f_{n_f}(p_k) \end{bmatrix}^\top \in \R^{n_f} .
\end{align*}
\end{proposition}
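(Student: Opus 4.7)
The plan is to close the loop in the state equation using the output equation, convert the resulting linear (but time--varying driven) recursion into a convolution sum, and then match that sum, term by term, with $\theta^\top \varphi_k$.

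First, I would observe that, since $y_k = C x_k$, the term $L(p_k) C x_k$ in \eqref{eq:moli_state} equals $L(p_k) y_k$, so the state recursion can be rewritten as
\begin{equation*}
x_{k+1} = A x_k + L(p_k) y_k + B(p_k) u_k,
\end{equation*}
i.e.\ a linear time--invariant recursion in $A$ driven by the scheduling--dependent forcing $L(p_k) y_k + B(p_k) u_k$. Unrolling under the standing stability/zero--initial--condition assumption on $A$ gives
\begin{equation*}
y_k = C x_k = \sum_{j=0}^{k-1} C A^{k-1-j} \bigl[ L(p_j) y_j + B(p_j) u_j \bigr].
\end{equation*}

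Second, I would substitute the basis--function expansions \eqref{eq:Lpk}--\eqref{eq:Bpk} and, since each summand is a scalar, transpose it to put $L_r$ and $B_r$ on the left. This yields
\begin{equation*}
y_k = \sum_{r=1}^{n_f} \sum_{j=0}^{k-1} \Bigl[ L_r^\top (A^\top)^{k-1-j} C^\top \, y_j f_r(p_j) + B_r^\top (A^\top)^{k-1-j} C^\top \, u_j f_r(p_j) \Bigr],
\end{equation*}
which exhibits the sought linearity in the parameters $L_r$, $B_r$.

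Third, I would identify the inner sums as the outputs of $2 n_f$ parallel LTI filters, each with state matrix $A^\top$, input matrix $C^\top$, and scalar input $y_j f_r(p_j)$ or $u_j f_r(p_j)$. Stacking these filter states and recognising that $[y_k\ u_k]^\top \otimes \F(p_k)$ produces precisely the required $2 n_f$ scalar inputs in the correct order shows that the parallel bank is realised by $\varphi_{k+1} = \mathcal{A} \varphi_k + \mathcal{B}([y_k\ u_k]^\top \otimes \F(p_k))$ with the block--diagonal $\mathcal{A}$, $\mathcal{B}$ given in the statement. Matching the $\stack$ ordering of $\theta$ to the block ordering of $\varphi_k$ makes $\theta^\top \varphi_k$ coincide with the double sum above, proving \eqref{eq:scalar_pred_eqn}.

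The main obstacle is bookkeeping rather than a deep step: one must be careful that the column--stacking convention of $\stack$, the Kronecker ordering in $[y_k\ u_k]^\top \otimes \F(p_k)$, and the transposition that moves $A$ and $C$ to $A^\top$ and $C^\top$ are all mutually consistent, so that the $r$--th $L$--block of $\varphi_k$ is indeed paired with $L_r$ (and similarly for $B_r$). Once this indexing is fixed, the identity $\hat y_k = \theta^\top \varphi_k = y_k$ follows by direct comparison.
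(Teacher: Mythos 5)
Your proposal is correct and follows essentially the same route as the paper: close the loop via $y_k = Cx_k$, expand the resulting LTI recursion driven by $\mathbf{L}\F(p_k)y_k + \mathbf{B}\F(p_k)u_k$, transpose the scalar summands to flip $CA^{l}$ into $(A^\top)^{l}C^\top$, and stack the resulting bank of $2n_f$ filters into the block--diagonal realization $(\mathcal{A},\mathcal{B})$. The only difference is presentational --- you use an explicit finite convolution sum with zero initial conditions and direct scalar transposition, whereas the paper works with the shift--operator resolvent $(qI-A)^{-1}$ and the $\stack(\cdot)$/Kronecker identities, which amount to the same manipulation.
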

\begin{proof}
State equation \eqref{eq:moli_state} can  be written as
\begin{equation} \label{eq:compact_moli_state}
x_{k+1} = A x_k + \mathbf{L} \F(p_k)y_k + \mathbf{B} \F(p_k)u_k .
\end{equation}
with 
\begin{align*}
\mathbf{L} & \triangleq \begin{bmatrix} L_1 & \cdots & L_{n_f} \end{bmatrix} \in \R^{n_x \times n_f} \\
\mathbf{B} & \triangleq \begin{bmatrix} B_1 & \cdots & B_{n_f} \end{bmatrix} \in \R^{n_x \times n_f},
\end{align*}
and $\F(p_k)$ as defined above. From \eqref{eq:moli_out} and \eqref{eq:compact_moli_state} we have
\begin{equation} \label{eq:prop1_output_eqn}
\hat{y}_k = C(qI_{n_x}-A)^{-1} \left( \mathbf{L} \F(p_k)y_k + \mathbf{B} \F(p_k)u_k \right),
\end{equation}
where $I_{n_x}$ is an identity matrix of dimension $n_x$ and $q$ is the forward shift operator, i.e., $qu_k = u_{k+1}$. The notation $\hat{y}_k$ is used to emphasize that the previous expression aims to predict $y_k$ based on  \eqref{eq:moli_state}--\eqref{eq:moli_out}, given input--output samples up to instant $k-1$. The first term in the right hand side of \eqref{eq:prop1_output_eqn} can be written as
\begin{equation*}
C(qI-A)^{-1} \mathbf{L} \F(p_k)y_k
= \sum_{l=0}^\infty C A^l q^{-(l+1)} \mathbf{L} \F(p_k)y_{k} .
\end{equation*}
Applying the $\stack \left( \cdot \right)$ operator to the previous equation yields
\begin{align}
\stack &\left(C(qI_{n_x}-A)^{-1} \mathbf{L} \F(p_k)y_{k}\right)  \label{eq:aug_obsv_Ly} \\
 & = \sum_{l=0}^\infty q^{-(1+l)} \left( \F^\top(p_k)y_{k} \otimes C A^l \right) \stack \left( \mathbf{L} \right) \nonumber \\
 & = \sum_{l=0}^\infty  q^{-(1+l)} \left( \F^\top(p_k)y_{k} \otimes C \right) \left( I_{n_f} \otimes A^l \right) \stack \left( \mathbf{L} \right) \nonumber \\
 & = \stack \left( \mathbf{L} \right)^\top q^{-1} \sum_{l=0}^\infty q^{-l} \left( I_{n_f} \otimes \left(A^l\right)^\top \right) \left( \F(p_k)y_{k} \otimes C^\top \right) \nonumber \\
 & = \stack \left( \mathbf{L} \right)^\top \left( qI_{n_x n_f} - I_{n_f} \otimes A^\top \right)^{-1} \left( \F(p_k)y_{k} \otimes C^\top \right). \nonumber 
\end{align}
Analogously, it follows that
\begin{align}
\stack &\left(C(qI_{n_x}-A)^{-1} \mathbf{B} \F(p_k)u_{k}\right) = \nonumber \\
& \stack \left( \mathbf{B} \right)^\top \left( qI_{n_x n_f} - I_{n_f} \otimes A^\top \right)^{-1} \left( \F(p_k)u_{k} \otimes C^\top \right). \label{eq:aug_obsv_Bu}
\end{align} 
Substituting \eqref{eq:aug_obsv_Ly} and \eqref{eq:aug_obsv_Bu} into \eqref{eq:prop1_output_eqn} results
\begin{align}
\hat{y}_k & = \begin{bmatrix}
\stack \left( \mathbf{L} \right)^\top \; \stack \left( \mathbf{B} \right)^\top
\end{bmatrix} \nonumber \\
& \cdot \left( qI_{2 n_x n_f} - I_{2 n_f} \otimes A^\top \right)^{-1}
\left( \begin{bmatrix}
\F(p_k)y_{k} \\ \F(p_k)u_{k}
\end{bmatrix} \otimes C^\top \right) \nonumber \\
& = \theta^\top \underbrace{\left(q I_{2 n_x n_f} - \mathcal{A}\right)^{-1} \underbrace{ \left( I_{2 n_f} \otimes C^\top \right) }_{=\mathcal{B}} \begin{bmatrix}
\F(p_k)y_{k} \\ \F(p_k)u_{k}
\end{bmatrix}}_{\varphi_k} \nonumber ,
\end{align}
which equals the input--output description of \eqref{eq:scalar_pred_state}--\eqref{eq:scalar_pred_eqn}.
\end{proof}

Notice that the user defined matrix $A$ determines the dynamics of the realization \eqref{eq:scalar_pred_state}--\eqref{eq:scalar_pred_eqn}. Thus, the eigenvalues of $A$ can be seen as design variables (or hyper--parameters) able to filter out the noise from the data. A data--driven, derivative--free approach to adjust these design variables is presented in Section~\ref{subsec:ddfiltertun}.  
If $A$ is in companion form, then an observable pair $(C,A)$ is constructed by choosing $C$ as a matrix filled with $0$s, except for one entry, which is set to $1$. Such choice is particularly 
 convenient because leads to a $\varphi_k$ composed of delayed versions of $f_r(p_k)y_k$ and $f_r(p_k)u_k$, for $r \in \{1,\ldots,n_f\}$. For example, the pair 
\begin{align} \label{eq:companion_A}
A &= \begin{bmatrix}
0 & \cdots & 0 & -\alpha_{n_x} \\
1 & &	& -\alpha_{n_x - 1} \\
 & \ddots & & \vdots \\
 & & 1 & -\alpha_{1} 
\end{bmatrix} \\
\label{eq:can_C}
C &= \begin{bmatrix}
0 & \cdots & 0 & 1  
\end{bmatrix}
\end{align}
has a regressor $\varphi_k$ given by
\begin{align}
\varphi_k & = \left(q I_{2 n_x n_f} - \mathcal{A}\right)^{-1} \mathcal{B} \left( \begin{bmatrix}
y_{k} \\ u_{k}
\end{bmatrix} \otimes \F(p_k) \right)
\nonumber \\
& = \frac{q^{n_x}}{\alpha(q)} \left(I_{2n_f} \otimes \begin{bmatrix} q^{-n_x} & \cdots & q^{-1} \end{bmatrix}^\top \right) \left( \begin{bmatrix}
y_{k} \\ u_{k}
\end{bmatrix} \otimes \F(p_k) \right), \nonumber 
\end{align}
which can be rewritten as
\begin{align}
\varphi_k & = \frac{q^{n_x}}{\alpha(q)}
\Big[ f_1(p_{k-n_x})y_{k-n_x} \; \cdots \; f_1(p_{k-1})y_{k-1} \; \cdots \nonumber\\
& \qquad \qquad f_{n_f}(p_{k-n_x})y_{k-n_x} \; \cdots \; f_{n_f}(p_{k-1})y_{k-1} \nonumber \\[5pt]
& \qquad \qquad f_1(p_{k-n_x})u_{k-n_x} \; \cdots \; f_1(p_{k-1})u_{k-1} \; \cdots \nonumber \\
& \qquad \qquad f_{n_f}(p_{k-n_x})u_{k-n_x} \; \cdots \; f_{n_f}(p_{k-1})u_{k-1} \Big]^\top ,  \label{eq:simple_info_vector}
\end{align}
where 
  \begin{equation}\label{eq:alpha}
  \alpha(q) = q^{n_x}\left(1+\alpha_{1}q^{-1}+\ldots +\alpha_{n_x}q^{-n_x}\right)
  \end{equation}
   is the characteristic polynomial of $A$.
  
  Hereafter, the user defined pair $(C,A)$ is assumed to be in the form \eqref{eq:companion_A}--\eqref{eq:can_C} to take advantage of this simplified form of $\varphi_k$. For the sake of compactness, it is also possible to consider a rearranged version of \eqref{eq:simple_info_vector} generated using a permutation matrix $\Permut$ such  that
\begin{equation} \label{eq:Pvarphi_k}
\Permut \varphi_k  = \frac{q^{n_x}}{\alpha(q)} \phi_k  ,
\end{equation}
with
\begin{equation}
\label{TP_phik}
\phi_k \triangleq \big[  \F^\top(p_{k-n_x}) \otimes \z_{k-n_x}^\top \;\; \cdots \;\; \F^\top(p_{k-1}) \otimes \z_{k-1}^\top \big]^\top
\end{equation}
and $\z_k \triangleq \left[y_k \;\; u_k \right]^\top$.
  Then, for a dataset of $N$ input--output samples, the regression model \eqref{eq:scalar_pred_eqn} can be written as
\begin{equation} \label{eq:matricial_regressor_eqn}
Y = \frac{q^{n_x}}{\alpha(q)} \Phi P \theta,
\end{equation}
where
\begin{align}
Y & = \begin{bmatrix} y_{n_x+1} & \cdots & y_N \end{bmatrix}^\top \nonumber \\
\label{TP_Phi}
\Phi & = \begin{bmatrix} \phi_{n_x+1} & \cdots & \phi_N \end{bmatrix}^\top.
\end{align}

\section{MODEL ESTIMATION ALGORITHM}

 Parameter estimation can be carried out by explicitly defining the basis functions $\F(p_k)$. 
Then the 
 parameter vector (composed 
  of the basis functions coefficients) is estimated by minimizing the criterion $\|Y - \Phi \theta\|_2^2$. But the success of this approach relies on an adequate choice of the basis functions, which often requires a complicated analysis of a first--principle model \cite{Bachnas:2014}. The least--squares support vector machines (LS--SVM) provides an efficient alternative to circumvent  the challenging $p$--dependent basis functions selection problem \cite{Laurain:2012, Toth:2011}. In what follows, a LS--SVM based method to estimate a nonparametric model to \eqref{eq:moli_state}--\eqref{eq:moli_out} is presented.

\subsection{Least--squares support vector machines (LS--SVM) solution } \label{subsec:flssvm}

From \eqref{eq:matricial_regressor_eqn}, the parameter estimation problem can be formulated as
\begin{equation} \label{eq:par_est_problem}
\hat{\theta} = \argmin{\theta} \frac{1}{2} \theta^\top \theta + \frac{\gamma}{2} E^\top E ,
\end{equation}
such that $$E =  Y - \frac{q^{n_x}}{\alpha(q)} \Phi \Permut \theta .$$
The regularization term $\gamma \in \R_+^\ast$ is introduced to adjust the bias--variance trade--off. In order to solve \eqref{eq:par_est_problem}, the Lagrangian
\begin{equation*} 
\L(E,\theta,\lambda) = \frac{1}{2} \theta^\top \theta + \frac{\gamma}{2} E^\top E - \lambda^\top\left( E - Y + \frac{q^{n_x}}{\alpha(q)} \Phi \Permut \theta \right)
\end{equation*}
is introduced, where $\lambda \in \R^{N-n_x}$ is the vector of the Lagrange multipliers. The Karush--Kuhn--Tucker (KKT) conditions for $\L(E,\theta,\lambda)$ 
 are given by
\begin{align}
\frac{\partial\L}{\partial E} = 0 \Rightarrow & 
E = \frac{\lambda}{\gamma} \label{eq:KTcon1}\\
\frac{\partial\L}{\partial \theta} = 0 \Rightarrow & 
\theta = \Permut^\top \Phi^\top \frac{q^{n_x}_{r}}{\alpha(q_{r})} \lambda\label{eq:KTcon2}\\
\frac{\partial\L}{\partial \lambda} = 0 \Rightarrow & 
E = Y - \frac{q^{n_x}_{l}}{\alpha(q_{l})} \Phi \Permut \theta . \label{eq:KTcon3}
\end{align}
The notation $q_l$ and $q_r$ is introduced in place of $q$ ($l$ stands for left and $r$ for right), to distinguish between the forward shift operators for the elements of $\Phi$ and $\Phi^\top$, respectively. Substituting \eqref{eq:KTcon1}--\eqref{eq:KTcon2} into \eqref{eq:KTcon3}, the vector $\lambda$ is obtained by solving
\begin{equation} \label{eq:lagrange_mult}
\left( \frac{I_{(N-n_x)}}{\gamma} + \frac{q_{l}^{n_x}}{\alpha(q_{l})} \Phi\Phi^\top \frac{q_{r}^{n_x}}{\alpha(q_{r})} \right) \lambda = Y .
\end{equation}

Firstly, suppose that the eigenvalues of $A$ are set to zero, which implies $\alpha(q_{l}) = q_{l}^{n_x}$ and $\alpha(q_{r}) = q_{r}^{n_x}$. In this particular case, \eqref{eq:lagrange_mult} simplifies to $(\gamma^{-1} I_{(N-n_x)} + \Phi\Phi^\top)\lambda = Y$. Therefore, in order to solve $\lambda$, it is enough to compute $\Phi\Phi^\top$
, whose $(i,j)$th entry is
given by 
\begin{align}
\left[\Phi\Phi^\top\right]_{ij} & = \phi_{i+n_x}^{\top}\phi_{j+n_x} 
 = \left( \F^\top(p_{i}) \otimes \z_{i}^\top \right)\left( \F(p_{j}) \otimes \z_{j} \right) + \ldots \nonumber
\\ & 
+ \left( \F^\top(p_{i+n_x-1}) \otimes \z_{i+n_x-1}^\top \right)\left( \F(p_{j+n_x-1}) \otimes \z_{j+n_x-1} \right) \nonumber \\
& = \sum_{m=0}^{n_x-1}  \left( \F^\top(p_{i+m}) \otimes \z_{i+m}^\top \right)\left( \F(p_{j+m}) \otimes \z_{j+m} \right) \nonumber \\
& = \sum_{m=0}^{n_x-1} \underbrace{\F^\top(p_{i+m}) \F(p_{j+m})}_{\triangleq \psi\left(p_{i+m}, p_{j+m} \right)} \z_{i+m}^\top \z_{j+m}. \label{eq:PhiPhiTentries}
\end{align}
The term $\psi\left( p_i, p_j \right)$ is a positive definite kernel function that defines the inner product $\F^\top(p_{i}) \F(p_{j})$ and is used to characterize the functions $f_r(p_k)$, for $r \in \{1,\ldots,n_f\}$. Among the many possible kernel functions (refer to \cite{Suykens:2002}), in this work we considered radial basis functions (RBF) with width $\sigma \in \R_+^\ast$. Thus, the kernel functions are calculated by the formula
\begin{equation*}
\psi\left( p_i, p_j \right) = \exp \left( - \frac{\|p_{i}-p_{j}\|_2^2}{\sigma^2}\right) ,
\end{equation*}
where $\| \cdot \|_2$ denotes the $\ell_2$--norm of the argument.

It can be shown that for the particular choice $\alpha(q)=q^{n_x}$, i.e. $\alpha(q_{l})=q_{l}^{n_x}$ and $\alpha(q_{r})=q_{r}^{n_x}$, \eqref{eq:moli_state}--\eqref{eq:moli_out} corresponds to the LPV state--space shifted--form presented in \cite{Toth:2012}, which is equivalent to the input--output (IO) representation
\begin{equation}\label{ARX-LPV}
y_k = - \sum_{m=1}^{n_\a} \a_m(p_{k-m})y_{k-m}+ \sum_{m=1}^{n_\b} \b_m(p_{k-m})u_{k-m},
\end{equation}
where $\a_m,\b_m: \P \to \R$ and $n_\a = n_\b = n_x$. Nevertheless, it still remains to address (in a LS--SVM context) the more general and interesting case, in which the roots of $\alpha(q)$ are not necessarily zero. In other words, the entries of the matrix
\begin{equation}\label{eq:fkernelmat}
\fkernelmat \triangleq \frac{q_{l}^{n_x}}{\alpha(q_{l})} \Phi\Phi^\top \frac{q_{r}^{n_x}}{\alpha(q_{r})},
\end{equation}
shall be calculated using the kernel function $\psi(p_i,p_j)$ that represent the inner--product $\F^\top(p_i)\F(p_j)$. In what follows, it is shown  that the entries of $\fkernelmat$ are the outputs of a separable--denominator 2D infinite impulse response (IIR) filter.

\begin{proposition}
Let $\alpha(q_l)$ and $\alpha(q_r)$ be defined as in \eqref{eq:alpha}. The $(i,j)$th entries of the matrix $\fkernelmat$ defined in \eqref{eq:fkernelmat} are the outputs of the 2D--system
\begin{equation} \label{eq:2Dfilter}
\fkernelmat_{i,j}=\dfrac{q_l^{n_x}q_r^{n_x}}{\alpha(q_l)\alpha(q_r)} \left[ \Phi \Phi^\top \right]_{ij}.
\end{equation}
Hence, these entries are computed by solving $\fkernelmat_{i,j}$ in
\begin{equation} \label{eq:fkernelmat_sumform}
\sum_{m_l = 0}^{n_x} \sum_{m_r = 0}^{n_x}  \alpha_{m_l} \alpha_{m_r} \fkernelmat_{i-m_l,j-m_r} = \left[ \Phi \Phi^\top \right]_{ij},
\end{equation}
considering $\alpha_0 = 1$.
\end{proposition}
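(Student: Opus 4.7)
The plan is to verify both displayed equations by direct manipulation of the shift operators. The first equation $\eqref{eq:2Dfilter}$ follows from the fact that $q_l$ and $q_r$ act on different indices: $q_l$ shifts the row index of $\Phi\Phi^\top$ (via $\Phi^\top$, whose rows are indexed by time) and $q_r$ shifts the column index (via $\Phi$). Since the two operators act on independent indices, they commute, so the composition $\frac{q_l^{n_x}}{\alpha(q_l)}\Phi\Phi^\top\frac{q_r^{n_x}}{\alpha(q_r)}$ can be rewritten, entrywise, as the single rational operator $\frac{q_l^{n_x}q_r^{n_x}}{\alpha(q_l)\alpha(q_r)}$ applied to $[\Phi\Phi^\top]_{ij}$. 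This is the separable-denominator 2D transfer function announced in the statement.

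Having established \eqref{eq:2Dfilter}, the second equation \eqref{eq:fkernelmat_sumform} is obtained by clearing denominators. Using the definition \eqref{eq:alpha}, write
\begin{equation*}
\alpha(q_l)\alpha(q_r) = q_l^{n_x}q_r^{n_x}\sum_{m_l=0}^{n_x}\sum_{m_r=0}^{n_x}\alpha_{m_l}\alpha_{m_r}\,q_l^{-m_l}q_r^{-m_r},
\end{equation*}
with $\alpha_0 = 1$. Multiplying both sides of \eqref{eq:2Dfilter} by $\alpha(q_l)\alpha(q_r)$ and cancelling the common factor $q_l^{n_x}q_r^{n_x}$ on each side yields
\begin{equation*}
\sum_{m_l=0}^{n_x}\sum_{m_r=0}^{n_x}\alpha_{m_l}\alpha_{m_r}\,q_l^{-m_l}q_r^{-m_r}\fkernelmat_{i,j} = [\Phi\Phi^\top]_{ij}.
\end{equation*}
Finally, translating $q_l^{-m_l}q_r^{-m_r}\fkernelmat_{i,j}$ into its time-domain meaning (backward shifts on the two indices) gives $\fkernelmat_{i-m_l,j-m_r}$, which is exactly \eqref{eq:fkernelmat_sumform}.

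The whole argument is therefore bookkeeping; the only conceptual step that needs justification is the commutativity and well-definedness of the left/right shift operators. The main obstacle, if any, is being careful that $q_l$ really is only shifting the row index of $\Phi\Phi^\top$ while $q_r$ shifts only the column index — a point that is transparent once one recalls that row $i$ of $\Phi^\top$ is built from data at time $i$ (through $\phi_{i+n_x}$ in \eqref{TP_Phi}) and similarly for the columns of $\Phi$. Once that decoupling is granted, the remainder reduces to expanding the polynomial $\alpha(q_l)\alpha(q_r)$ as a double sum and reading off the coefficients, so no further subtleties arise.
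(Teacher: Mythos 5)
Your proof is correct and follows essentially the same route as the paper: combine the left and right shift operators acting on the row and column indices into a single separable 2D rational operator, then clear denominators to obtain the recursion. If anything, you are slightly more explicit than the paper, which stops after establishing \eqref{eq:2Dfilter} and leaves the derivation of \eqref{eq:fkernelmat_sumform} implicit.
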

\begin{proof}
From \eqref{eq:matricial_regressor_eqn} and \eqref{eq:PhiPhiTentries} it follows that the $(i,j)$th entry of \eqref{eq:fkernelmat} is given by
\begin{align*}
\fkernelmat_{i,j} & = \frac{q_{l}^{n_x}}{\alpha(q_l)} \phi_{i+nx}^\top \phi_{j+n_x} \frac{q_{r}^{n_x}}{\alpha(q_r)}\\
& =\frac{q_{l}^{n_x}}{\alpha(q_l)}  \left[\Phi\Phi^\top\right]_{ij}\frac{q_{r}^{n_x}}{\alpha(q_r)}=\dfrac{q_l^{n_x}q_r^{n_x}}{\alpha(q_l)\alpha(q_r)} \left[ \Phi \Phi^\top \right]_{ij}
\end{align*}
and this completes the proof.
\end{proof}

For $p_k=\bar{p}$, it follows from \eqref{eq:Lpk} that
\begin{align*}
L(\bar{p}) =\mathbf{L}\F(\bar{p})=\left( \F^\top(\bar{p}) \otimes I_{n_x} \right)\stack(\mathbf{L}).
\end{align*}
Using an analogous expression for \eqref{eq:Bpk}, the following equation relating the parameter varying vectors $L(\bar{p})$ and $B(\bar{p}))$ to $\theta$ is obtained
\begin{equation*}
\begin{bmatrix} L(\bar{p}) \\ B(\bar{p}) \end{bmatrix} = \left( I_2 \otimes \F^\top (\bar{p}) \otimes I_{n_x} \right)
\underbrace{\begin{bmatrix} \stack(\mathbf{L}) \\ \stack(\mathbf{B}) \end{bmatrix}}_{=\theta}.
\end{equation*}
Substituting $\theta$, from condition \eqref{eq:KTcon2}, in the previous equation and using definitions \eqref{TP_phik} and \eqref{TP_Phi} yields
\begingroup
\setlength{\arraycolsep}{4pt}
\begin{align}
\begin{bmatrix} L(\bar{p}) \\ B(\bar{p}) \end{bmatrix} & = \left( I_2 \otimes \F^\top (\bar{p}) \otimes I_{n_x} \right) \Permut^\top \Phi^\top \frac{q_{r}^{n_x}}{\alpha(q_{r})}\lambda \nonumber \\
& = \begin{bmatrix}
 \psi(\bar{p},p_1) y_1  & \cdots & \psi(\bar{p},p_{N-n_x}) y_{N-n_x} \\
\vdots & & \vdots \\
 \psi(\bar{p},p_{n_x}) y_{n_x} & \cdots & \psi(\bar{p},p_{N-1}) y_{N-1} \\
 \psi(\bar{p},p_1) u_1 & \cdots & \psi(\bar{p},p_{N-n_x}) u_{N-n_x}  \\
\vdots & & \vdots \\
\psi(\bar{p},p_{n_x}) u_{n_x} & \cdots & \psi(\bar{p},p_{N-1}) u_{N-1} 
\end{bmatrix} \frac{q_{r}^{n_x}}{\alpha(q_{r})} \lambda . \nonumber
\end{align}
\endgroup
Next, define
\begin{align*}
\psi_{k}^{y} (\bar{p}) &=\frac{q_{r}^{n_x}}{\alpha(q_{r})}\psi(\bar{p},p_k) y_k\\
\psi_{k}^{u} (\bar{p}) & =\frac{q_{r}^{n_x}}{\alpha(q_{r})} \psi(\bar{p},p_k) u_k ,
\end{align*}
 that is $\psi_{k}^{y} (\bar{p})$ and $\psi_{k}^{u} (\bar{p})$ are filtered versions of the products 
$\psi(\bar{p},p_k) y_k$ and $\psi(\bar{p},p_k) u_k$, respectively.
Thus, the values of $L(\bar{p})$ and $B(\bar{p})$ may be reconstructed through
\begin{align}
\label{Lbarp}
L(\bar{p}) & =\Psi^y(\bar{p})\lambda\\
\label{Bbarp}
B(\bar{p}) & =\Psi^u(\bar{p})\lambda ,
\end{align}
where $\Psi^y(\bar{p})$ and $\Psi^u(\bar{p})$ are the Hankel matrices
\begin{align}
\label{HankelPsiY}
\Psi^y(\bar{p})&=
\begin{bmatrix}
\psi^{y}_1(\bar{p}) & \cdots & \psi^{y}_{N-n_x}(\bar{p}) \\
\vdots & \vdots & \vdots \\
\psi^{y}_{n_x}(\bar{p}) & \cdots & \psi^{y}_{N-1}(\bar{p})
\end{bmatrix}\\
\label{HankelPsiU}
\Psi^u(\bar{p})&=
\begin{bmatrix}
\psi^{u}_1(\bar{p}) & \cdots & \psi^{u}_{N-n_x}(\bar{p}) \\
\vdots & \vdots & \vdots \\
\psi^{u}_{n_x}(\bar{p}) & \cdots & \psi^{u}_{N-1}(\bar{p})
\end{bmatrix}.
\end{align}
%

\subsection{Data--driven filter tuning} \label{subsec:ddfiltertun}

As argued before, the eigenvalues of the parameter independent matrix $A$ determines the poles of the output predictor presented in Proposition \ref{prop:predictor}. Therefore, their choice can be used to filter the disturbances and the measurement noise from the plant signals. But rather than treating all the eigenvalues of $A$ as free design parameters, the polynomial $\alpha(q)$ is  parametrized as
\begin{equation} \label{eq:butteralpha}
\alpha(q) = \prod_{m=1}^{n_x} \left( q-e^{-s_mT_s} \right) ,
\end{equation}
such that its roots are the poles of Butterworth filters with cutoff frequency $\omega_c$, namely 
\begin{equation*}
s_m = \omega_c e^{j \frac{\left(2m+n_x-1\right)}{2n_x} }
\end{equation*}
mapped to the discrete--time domain using a sampling period $T_s$.

The reasons for adopting Butterworth polynomials are twofold. First, the coefficients of $\alpha(q)$ are expressed as a function of $\omega_c$, whose numerical value has a clear physical interpretation. Second, the filter tuning becomes a search in a subspace of dimension 1. Indeed, the price to be paid for the reduction in the search space of filter parameters is less flexibility in the 2D--filter frequency response.

In this work, the filter is tuned using a derivative--free optimization method, based on index of merit $J$. To this aim, we consider a set $\Omega = \{\omega_1, \ldots, \omega_{n_\omega}\}$ of candidate values of $\omega_c$   called ``curiosity points,'' and let $J(\omega_c, \D)$ be a functional which quantifies the performance of the values $\omega_c \in \Omega$ given a data set $\D = \{y_1, u_1, p_1,\ldots,y_N, u_N,p_N\}$. Then, the filter cutoff frequency is calculated through
\begin{equation} \label{eq:barycenter}
\omega_c^\ast = \frac{\sum_{\upsilon=1}^{n_\omega} \omega_\upsilon e^{-\mu J\left(\omega_\upsilon,\D\right)}}
{\sum_{\upsilon=1}^{n_\omega} e^{-\mu J\left(\omega_\upsilon,\D\right)}}.
\end{equation}
Therefore, $\omega_c^\ast$ is the barycenter of the curiosity points $\omega_\upsilon$ weighted
by the term $e^{-\mu J\left(\omega_\upsilon,\D\right)}$. The constant $\mu \in \R_+^\ast$ is used to adjust weighting terms --- the higher $\mu$, the more $\omega_c^\ast$ tends to the element in $\Omega$ that provides the lowest $J$ (best performance). The rationale behind \eqref{eq:barycenter}
is that curiosity points which achieve better performance are given more weight than those that lead to ``worse'' results. Notice that considerable freedom is retained in the choice of $J$, as its derivatives are not required. Instead, only the numerical values of the functional has to be computed for each $\omega_\upsilon$. For this reason, the barycenter can be seen as a direct optimization method. Many alternatives to the barycenter method, which was selected for its simplicity and robustness \cite{Pait:2014}, are described in the literature \cite{Conn:2009}.

\section{CASE STUDY} \label{sec:simulations}

Consider the LPV data--generating system
\begin{align*}
x_{k+1} & = \begin{bmatrix} a_{11}(p_k) & 1 \\ a_{21}(p_k) & 0 \end{bmatrix} x_k
+ \begin{bmatrix} b_{1}(p_k) \\ b_{2}(p_k) \end{bmatrix} u_k \\
y_k & = \begin{bmatrix} 1 & 0 \end{bmatrix}  x_k + v_k,
\end{align*}
where $v_k$ is a zero--mean Gaussian white noise sequence, whose variance is adjusted in order to get an specific signal--to--noise (SNR) ratio, $\P = [-0.25, 0.25]$ and
\begin{align*}
a_{11}(p_k) & = 0.35 \sinc \left( \pi^2 p_k\right) + 1.4\\
a_{21}(p_k) & = 5p_k^2 - 0.8\\
b_{1}(p_k) & = \left\{ \begin{array}{ll}
1.5 			& \text{, for } p_k > 0.125 \\
1 + 4p_k	& \text{, for } \left| p_k \right| \leq 0.125 \\
0.5 			& \text{, for } p_k < -0.125
\end{array} \right. \\
b_{2}(p_k) & = \left\{ \begin{array}{ll}
0 			& \text{, for } p_k > 0.125 \\
0.5 - 4p_k	& \text{, for } \left| p_k \right| \leq 0.125 \\
1 			& \text{, for } p_k < -0.125 .
\end{array} \right.
\end{align*}
Such system is inspired in the so-called \textit{\r{A}str\"{o}m system} \cite{Ljung:1999}, which is extended here to the LPV framework. To investigate the performance of the presented algorithm under different noise conditions, two Monte Carlo simulations of $200$ runs were performed with different signal--to--noise ratios (SNR%
), namely $\text{SNR}=20$dB and $\text{SNR}=10$dB. In each Monte Carlo run the signal $u$ is a realization of a zero--mean white--noise binary signal with length $N=800$ samples and $p$ 
is a realization of a white--noise signal with uniform distribution in the interval $[-0.25, 0.25]$.

Besides evaluating the performance of the proposed algorithm, these experiments were also used to assess the impact of not being restricted to the choice $\alpha(q)=q^{n_x}$, particularly when the predictor poles are tuned using the barycenter formula presented in Section~\ref{subsec:ddfiltertun}. Therefore, two models were estimated in each Monte Carlo run. The first, denoted as (standard) LS--SVM, was estimated with the predictor poles set at the origin of the complex plane. 
With this predictor the algorithm 
is identical to the LS--SVM method proposed in \cite{Toth:2011} for LPV--ARX IO models. The only difference is that here the ARX--LPV IO model has a dynamic dependence on $p_k$ as shown in equation \eqref{ARX-LPV}. The second is estimated by considering $\alpha(q)$ as in \eqref{eq:butteralpha}, with $\omega_c$ given by \eqref{eq:barycenter}. This model is referred to as 2D--filter, due to the way we propose to address arbitrary predictor pole choices.

The \textit{best fit rate} (BFR) was used to evaluate the models accuracy. This index is defined as 
$$\text{BFR(\%)} = 100 \cdot \max \left(1-\frac{\|Y-\hat{Y}\|_2}{\|Y-\bar{Y}\|_2} , \,0\right) ,$$
where $\hat{Y}$ is the output simulated by the estimated model and $\bar{Y}$ is the mean of the observed output sequence $Y$. The BFR index was calculated with a noiseless input-output validation data-set. 

The index of merit
$$
J(\omega_\upsilon, \D) = \min \left(\frac{\|Y-\hat{Y}(\omega_\upsilon)\|_2}{\|Y-\bar{Y}\|_2} , \,1\right)
$$
(closely related to the BFR criterion) was used to tune the cutoff frequency of the 2D--filter via \eqref{eq:barycenter}.  The set of curiosities $\Omega$ comprised six frequencies logarithmically spread between 0.05 and 0.5 of the Nyquist frequency, i.e., $\Omega = \{0.05, 0.08, 0.13, 0.2, 0.32, 0.5\}\frac{1}{\pi}$ and $\mu$ set to $130$. The regularization parameter and the RBF kernel width were set to $\gamma = 100$ and $\sigma = 0.2$, respectively, based on a trial and error procedure. The use of the barycenter method to achieve reasonable values for these hyper--parameters will be reported in a future work.

\addtolength{\textheight}{-0.5cm}   

The histograms of the achieved BFR are presented in Figure~\ref{fig:bfr20db10dbhist}. The 2D--filter approach provided more accurate models. The results also reveal that, instead of setting $\alpha(q) = q^{n_x}$, an appropriate choice of the predictor poles leads to less variability and to more robustness.
\begin{figure}[htb]
\centering
\includegraphics[width=\linewidth]{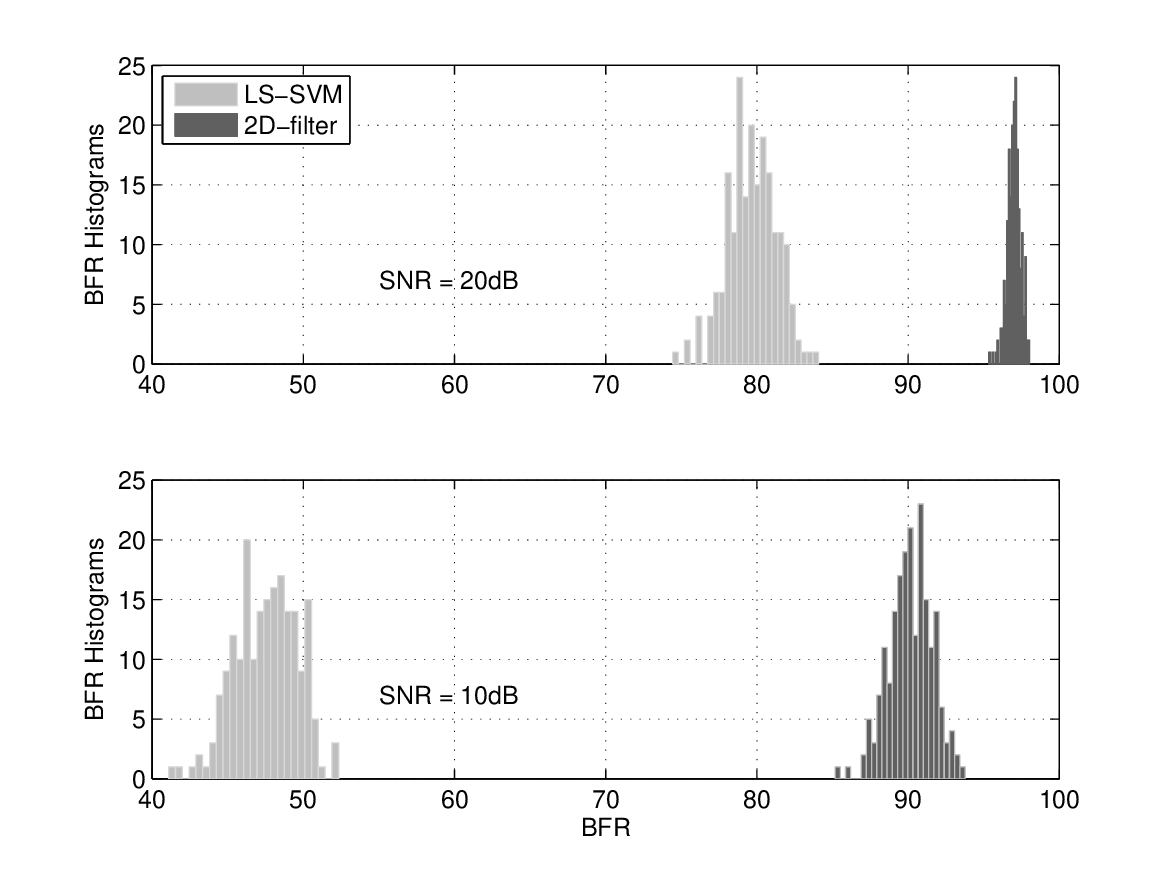}

\vspace*{-6pt}
\caption{BFR histogram achieved with each SNR.}
\label{fig:bfr20db10dbhist}
\end{figure}
These conclusions can also be drawn from Table~\ref{tab:BFRmeanstd}, that shows the mean value and the standard deviation of the BFR. The modest results provided by the standard LS--SVM method give an idea of the difficulty of the estimation problem being considered.
\begin{table}[htb]
\centering
\caption{BFR mean and standard deviation (STD).}
\label{tab:BFRmeanstd}
\begin{tabular}{ccccc}
 & \multicolumn{2}{c}{SNR=$20$dB} & \multicolumn{2}{c}{SNR=$10$dB} \\ 
\hline 
Approach & Mean & STD & Mean & STD \\ 
LS--SVM & $79.7$\% & $1.62$ & $47.6$\% & $2.05$ \\ 
2D--filter & $97.0$\% & $0.44$ & $90.2$\% & $1.43$ \\ 
\end{tabular} 
\end{table}

Figure~\ref{fig:ab_coeffs20db10db} shows that the 2D--filter significantly reduced the variance and removed the bias of the 
estimates.
\begin{figure}[!p]
\centering
\includegraphics[width=\linewidth]{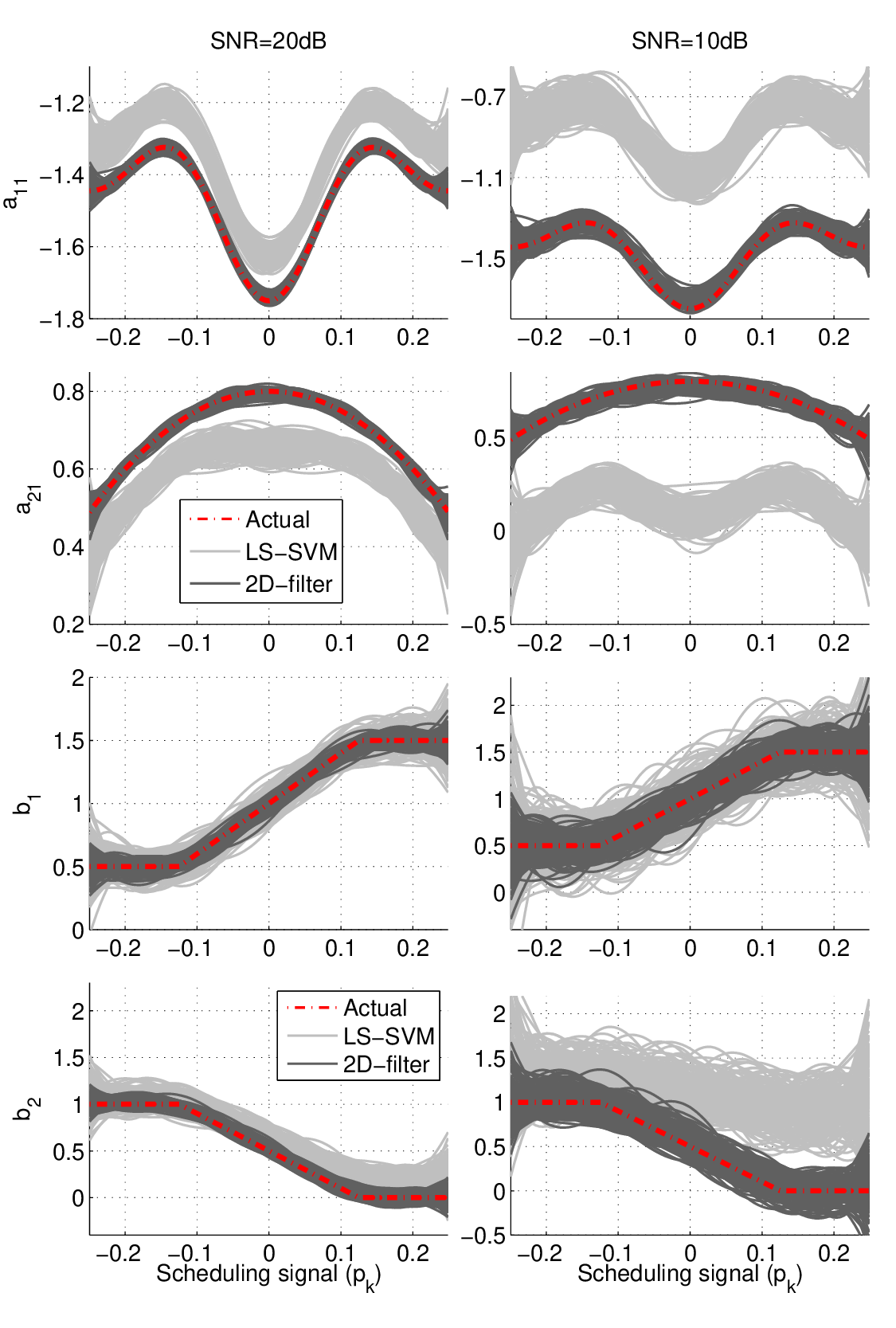}

\vspace*{-6pt}
\caption{Estimation results of the underlying coefficient functions.}
\label{fig:ab_coeffs20db10db}
\end{figure}
Indeed, the proposed method provides accurate estimates of the nonlinear $p$--dependent functions, without using any prior structural information about the system, except for the model order.

\section{CONCLUSION} \label{sec:conclusion}
This paper presents a method to estimate state--space LPV models based on the LS--SVM framework. The parameterization considered has user--defined matrices that determine a characteristic equation of the 2D system used to filter the kernel matrix. Butterworth polynomials are used to parameterize the filter characteristic equation, and a direct optimization technique (barycenter) is applied to search for its cutoff frequency. A simulated case-study showed that the proposed approach outperforms the results attained when the kernel matrix is not filtered (which is equivalent to a standard LPV--ARX estimator). This improvement is due to the flexibility in the assignment of the predictor poles handled by the 2D--filter, in combination with the data--driven tuning approach. An interesting possibility is to reformulate the proposed algorithm using the instrumental variable estimator (IV--SVM). Research concerning this topic is ongoing and shall be reported in an upcoming contribution.

\section*{ACKNOWLEDGMENT}

R Romano was supported by  Instituto Mau\'a de Tecnologia (IMT). P Lopes dos Santos and T-P Perdico\'{u}lis have been supported by FCT.


\bibliographystyle{IEEEtranS}
\bibliography{lpv_refs,molizoft_refs}

\end{document}